\newtheorem{theorem}{Theorem}[section]
\newtheorem{lemma}{Lemma}[section]
\newtheorem{Definition}{Definition}[section]
\def\qed{{\hfill{\vrule height4pt width3pt depth2pt}}}
   \long\def\comment#1{}
\def\ad#1{\begin{aligned}#1\end{aligned}}  \def\b#1{\mathbf{#1}} \def\t#1{{\text{#1}}}
\def\a#1{\begin{align*}#1\end{align*}} \def\an#1{\begin{align}#1\end{align}} \def\t#1{\hbox{#1}}
  \numberwithin{equation}{section}
\numberwithin{table}{section} \numberwithin{figure}{section}
\def\boxit#1{\vbox{\hrule height1pt \hbox{\vrule width1pt\kern1pt
     #1\kern1pt\vrule width1pt}\hrule height1pt }}
 \def\lab#1{\boxit{\small #1}\label{#1}}   
     \def\lab#1{\label{#1}}  
\newcommand{\RR}{\mathbb{R}}
\begin{document} 
\long\def\comments#1{ }
\comments{ }
%%%%%%%%%%%%%%%%%%%%%%%%%%%%%%%%%%%%%%%%%%%%%%%%%%%%%%%%%%%%%%%%%%%%%%%%%%

\title[Interpolation finite element]{An interpolated Galerkin finite element method for the Poisson equation }
\date{}

 \author {Tatyana Sorokina$^1$}
\address{Department of Mathematics, Towson University, 7800 York Road, Towson, MD 21252, USA.   tsorokina@towson.edu }
 
\thanks{$^1$ The author is partially supported by the grant from the Simons Foundation \#235411 to Tatyana Sorokina}

 \author {Shangyou Zhang}
\address{Department of Mathematical Sciences, University of Delaware,
    Newark, DE 19716, USA.  szhang@udel.edu }

\begin{abstract}
When solving the Poisson equation by the finite element method,
    we use one degree of freedom for interpolation by the given Laplacian --
   the right hand side function in the partial differential equation.
The finite element solution is  the Galerkin projection in a smaller vector space.
The idea is similar to that of interpolating 
   the boundary condition in the standard finite element method.
Due to the pointwise interpolation,  our method yields  a smaller system of equations and  a better condition number.
The number of unknowns on each element is reduced significantly 
   from $(k^2+3k+2)/2$ to $3k$ for the
   $P_k$ ($k\ge 3$) finite element. 
We construct 2D $P_2$ conforming and nonconforming, and $P_k$ ($k\ge3$)
     conforming interpolated Galerkin finite elements on triangular grids.
This interpolated Galerkin finite element method is proved to converge at the optimal order.
Numerical tests and comparisons with the standard finite elements
   are presented, verifying the theory and showing advantages of the interpolated Galerkin 
     finite element method. 
  \vskip 15pt

\noindent{\bf Keywords:}{
    finite element,  interpolated finite element,  triangular grid,   Poisson equation.}

 \vskip 15pt

\noindent{\bf AMS subject classifications.}
    { 65N30, 65N15.}

\end{abstract}
\maketitle

\section{Introduction}

 Standard  finite element methods use the full $P_k$ polynomials (of total degree $\leq k$) on each element
 (e.g. triangle or tetrahedron), in order to achieve the optimal order of approximation, 
   in solving partial differential equations.
 In certain situations the $P_k$ polynomial space is enriched by the so-called bubble functions,
  for stability or continuity,
   cf. \cite{Arnold,  Brenner-Scott, Falk, Hu-Huang-Zhang, Hu-Zhang, 
 Hu-Zhang-Low, Huang-Zhang, Schumaker, Zhang, Zhang-3D-C1, Zhang-4D, Zhang-P3}. 
But only in one case we use a proper subspace of $P_k$ polynomials while retaining the
   optimal order, $O(h^k)$ in $H^1$-norm, of convergence.
That is the harmonic finite element method for solving the Laplace equation, $\Delta p=p_{xx}+p_{yy}=0$,
   where only harmonic polynomials in $P_k$ are used \cite{Sorokina2, Sorokina}.
 
For example,  in the $P_2$ nonconforming element method for solving the following Laplace equation,
\an{ \label{h-e} \ad{ -\Delta u & =0, \quad \t{ in } \ \Omega, \\
                      u&=f, \quad \t{ on } \ \partial \Omega, } }
            where $\Omega$ is a  bounded polygonal domain in $\RR^2$,
  the five basis functions on the element boundary are harmonic polynomials and only the
   sixth basis function (which vanishes on the 6 Gauss-Legendre points on the three edges)
  is not a harmonic polynomial.
So, in \cite{Sorokina}, the 6th basis function of the $P_2$ nonconforming element  
    is thrown away, on every triangle, in the harmonic finite element method.
For example, on a uniform triangular grid on a square domain, the
  number of unknowns is reduced from $(2n-1)^2+2n^2$ to $(2n-1)^2$, about one-third less.
But the harmonic finite element method cannot be applied directly to the Poisson equation,
\an{ \label{e} \ad{ -\Delta u & =f, \quad \t{ in } \ \Omega, \\
                      u&=0, \quad \t{ on } \ \partial \Omega, } }
            where $\Omega$ is a  bounded polygonal domain in $\RR^2$.
The sixth basis function of $P_2$ nonconforming finite element must be added to the
  harmonic finite element method.
This is then the standard $P_2$ nonconforming element method where the solution is
\an{\label{2s} u_h=\sum_{\b x_i\in \partial K\setminus \partial \Omega} u_i \phi_i + 
         \sum_{\b x_j\in K^o} u_j \phi_j +
         \sum_{\b x_k\in \partial \Omega} c_k \phi_k, }
where $c_k$ 
    are interpolated values on the boundary, and $u_i$ and $u_j$
    are obtained from the Galerkin projection (from the solution of a discrete linear system of
  equations).
But the sixth basis function is local and the only non-harmonic polynomial
   which can be obtained from the right hand side function $f$
   in \eqref{e}.  That is,  the solution of
   the $P_2$ nonconforming interpolated Galerkin finite element is  
\an{\label{1s} u_h=\sum_{\b x_i\in \partial K\setminus \partial \Omega} u_i \phi_i + 
         \sum_{\b x_j\in K^o} c_j \phi_j +
         \sum_{\b x_k\in \partial \Omega} c_k \phi_k, }
where $c_j$ (could be $f(\b x_j)$ depending on which $\phi_j$ is used)
   are interpolated values of the right hand side function $f$,
   $c_k$ are interpolated boundary values, and only $u_i$ 
    are obtained from the Galerkin projection.
The new method does not only reduce the number of unknowns (from $O(k^2)$ to $O(k)$),
   but also improves the condition number.  It is totally different from the
  traditional finite element static condensation which does Gaussian elimination from internal
   degrees of freedom first.

In this work,  in addition to constructing special $P_2$ conforming and nonconforming
    interpolated finite elements, we redefine the basis functions of 
   the $P_k$ ($k\ge 3$) Lagrange finite element.
We keep the Lagrange nodal values on the boundary of each element, and replace
  the internal Lagrange nodal values by the internal Laplacian values at these
  internal Lagrange nodes.
This way, the linear system of Galerkin projection equations involves only the
  unknowns on the inter-element boundary.
Therefore the number of unknowns on each element is reduced from $(k+1)(k+2)/2$ to $3k$
  as all internal unknowns are interpolated   by the given function $f$ directly.
We show that the interpolated Galerkin finite element solution converges at the optimal order.
Numerical tests are provided to the above mentioned interpolated Galerkin finite elements, 
   in comparison with the standard finite element method.

\section{The $P_2$ interpolated Galerkin conforming finite element}

The $P_2$ interpolated Galerkin conforming
   finite element is defined only on macro-element grids, while the 
  rest higher order $P_k$ elements are defined on general triangular grids. 
We will define another $P_2$ interpolated nonconforming element next section on general 
    triangular grids.

A $P_2$ harmonic polynomial is a linear combination of $1, x, y, x^2-y^2$ and $xy$.
A $P_2$ interpolated finite element basis function is a linear combination of
    $1, x, y, x^2-y^2, xy$ and $x^2+y^2$.
  Only the last basis function has a non-zero Laplacian.

Let the union of four triangles $\hat K=\cup_{i=1}^4 K_i$ be a reference macro-element
    shown in Figure \ref{K-hat} (left).
On the reference macro-element $\hat K$, the $P_2$ finite element space is
\an{\label{P2hat}  P_{\hat K}:&= \{ v_h \in L^2(\hat K) \mid 
     v_h|_{K_i}\in P_{2}; \  v_h\in C^0(\b x_i), \ i=1,2,3,4; \\
   \nonumber     &\qquad v_h\in C^1(\b x_9); \ \Delta v_h\in P_0 \}, }
where $v_h \in C^0(\b x_i)$ means that the two adjoining at $\b x_i$  polynomial pieces 
   of $v_h$ have the same value at $\b x_i$, $v_h \in C^1(\b x_9)$ means that the four adjoining at $\b x_9$ polynomial pieces of $v_h$  and its first derivatives have matching values at $\b x_9$, and $\Delta v_h\in P_0$ means that the four adjoining at $\b x_9$ polynomial pieces of $v_h$ have 
   a matching constant Laplacian. 
These conditions immediately imply that  $v_h$ is continuous on $\hat K$.
We will  show that the dimension of the  space $P_{\hat K}$ is 9, and each such $P_2$
  function is uniquely determined by its 8 nodal values,  $v_h(\b x_i)$, $i=1,2,\dots,8$, and the value of $\Delta v_h$, see
   Figure \ref{K-hat} (left).

\begin{figure}[h!]
\begin{minipage}{0.47\linewidth}
\begin{center} \setlength\unitlength{5pt}
\begin{picture}(20,20)(0,0) 
  \def\cr{\begin{picture}(20,20)(0,0)\put(0,0){\line(1,0){20}}\put(0,20){\line(1,0){20}}
   \put(0,0){\line(0,1){20}} \put(20,0){\line(0,1){20}} 
    \put(0,0){\line(1,1){20}} \put(20,0){\line(-1,1){20}}\end{picture}}
 \put(0,0){\cr} \put(9.5,3){$K_1$} \put(16,9.5){$K_2$} \put(9.5,16){$K_4$} \put(3,9.5){$K_3$} 
  \put(-2.5,0){$\b x_1$} \put(21,0){$\b x_2$} \put(21,20){$\b x_3$} \put(-2.5,20){$\b x_4$} 
  \put(11,9.2){$\b x_9$}
  \put(10,-1.5){$\b x_5$} \put(21,9){$\b x_6$}\put(10,20.5){$\b x_7$} \put(-2.5,9){$\b x_8$}
  \put(0,0){\circle*{0.5}}\put(10,0){\circle*{0.5}}\put(20,0){\circle*{0.5}}
  \put(0,10){\circle*{0.5}}                      \put(20,10){\circle*{0.5}}
  \put(0,20){\circle*{0.5}}\put(10,20){\circle*{0.5}}\put(20,20){\circle*{0.5}}
 \end{picture}\end{center}
\end{minipage}
\begin{minipage}{0.47\linewidth}
\begin{center} \setlength\unitlength{5pt}
\begin{picture}(20,20)(0,0) 
  \def\cr{\begin{picture}(20,20)(0,0)\put(0,0){\line(1,0){20}}\put(0,20){\line(1,0){20}}
   \put(0,0){\line(0,1){20}} \put(20,0){\line(0,1){20}} 
    \put(0,0){\line(1,1){20}} \put(20,0){\line(-1,1){20}}\end{picture}}
 \put(0,0){\cr} \put(3.5,13.3){$c_{13}$} \put(15.5,5.5){$c_{11}$} \put(15.5,13.7){$c_{12}$} \put(3.2,6.1){$c_{10}$} 
  \put(-2.5,0){$ c_1$} \put(21,0){$ c_2$} \put(21,20){$ c_3$} \put(-2.5,20){$ c_4$} 
  \put(11,9.2){$ c_9$}
  \put(10,-1.5){$ c_5$} \put(21,9){$ c_6$}\put(10,20.5){$ c_7$} \put(-2.5,9){$ c_8$}
  \put(0,0){\circle*{0.5}}\put(10,0){\circle*{0.5}}\put(20,0){\circle*{0.5}}
  \put(0,10){\circle*{0.5}}                      \put(20,10){\circle*{0.5}}
  \put(0,20){\circle*{0.5}}\put(10,20){\circle*{0.5}}\put(20,20){\circle*{0.5}}\put(5,5){\circle*{0.5}}\put(5,15){\circle*{0.5}}\put(15,5){\circle*{0.5}}\put(15,15){\circle*{0.5}}\put(10,10){\circle*{0.5}}
 \end{picture}\end{center}
\end{minipage}
\caption{\label{K-hat} The reference macro-element,  $\hat K=\cup_{i=1}^4 K_i=[-1,1]^2$ (left), and the B-coefficients associated with the domain points in $\hat K$ (right)}
\end{figure}
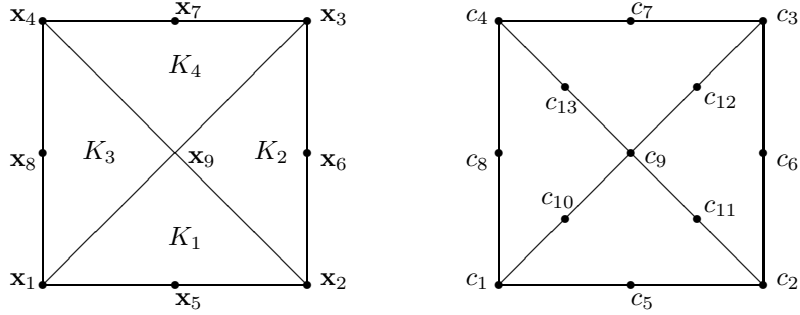

% Let $\mathcal{T}_h:=\cup_{\hat K_h\in \mathcal{T}_h} \hat K_h$ be a uniform partition of a polygonal domain $\Omega\in \RR^2$ into squares $\hat K_h$ of size $h$. 
% Each square $\hat K_h$ is further subdivided into four triangles as shown in Figure \ref{K-hat}(left), and $P_{\hat K_h}$ is the macro-element space 
% of the type $P_{\hat K}$.
% The $P_2$ conforming harmonic finite element space is defined as
% \an{\label{h2c} V_{h,c} =\{ v_h \in H^1 (\Omega) \mid
%           v_h |_{\hat K_h} \in P_{\hat K_h}~~~ \ \text{for every square}~~~\hat K_h\in \mathcal{T}_h \}.  }
% Let  $I_h$ be the Scott-Zhang interpolation of~\cite{Scott-Zhang} to the space $V_{h,c}$.
%   The finite element discretization of problem \eqref{e} is as follows.  Find $ u_h \in V_{h,c}$ such
% that  $u_h=I_h u$ on the boundary of $\mathcal{T}_h$,
%    and    
% \an{\lab{fe} a( u_h, v_h ) = 0, \qquad \forall v_h\in V_{h,c}\cap H^1_0(\Omega), }
% where the bilinear form 
% \a{  a(u_h, v_h) &= \int_\Omega \nabla u_h \cdot \nabla v_h ~d \b x.  }

\begin{theorem} Consider $S^0_2 (\hat K)\xrightarrow[]{\Delta} S^{-1}_0(\hat K)$, where  
\a{  S^{0,1}_2(\hat K) &: = \{ s|_{K_i}\in P_{2},  \ i=1,2,3,4; \ 
     s\in C^1(\b x_9); \  s\in C^0(\hat K) \},\\
   S^{-1}_0(\hat K) &: = \{ s|_{K_i}\equiv c_i\in\RR,  \quad i=1,2,3,4. \}.}
Then the image of $S^0_2 (\hat K)$ is a three dimensional subspace of $S^{-1}_0(\hat K)$ consisting of piecewise constants satisfying 
the condition $c_1-c_2+c_3-c_4=0$.
\end{theorem}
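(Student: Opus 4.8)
The plan is to strip each piecewise quadratic down to its homogeneous quadratic part and then solve a small linear system. First note that $\Delta$ does map $S^{0,1}_2(\hat K)$ into $S^{-1}_0(\hat K)$, since the Laplacian of a quadratic is a constant; thus the image is automatically a linear subspace of $S^{-1}_0(\hat K)$, and only its dimension and defining relation need to be found. Given $s\in S^{0,1}_2(\hat K)$, the $C^1(\b x_9)$ condition gives $s$ a single value and a single gradient at the central node $\b x_9=(0,0)$; subtracting the affine function $\ell(x,y)=s(\b x_9)+\nabla s(\b x_9)\cdot(x,y)$ keeps $s-\ell$ in $S^{0,1}_2(\hat K)$ and does not change $\Delta s$, while forcing each piece $q_i:=(s-\ell)|_{K_i}$ to be homogeneous quadratic, $q_i=d_ix^2+e_ixy+f_iy^2$, with Laplacian the constant $c_i=2(d_i+f_i)$. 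So it suffices to find the image of $\Delta$ on the space of piecewise homogeneous quadratics.

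Next I would encode the global $C^0$ continuity as equations in the $q_i$. The four interior edges join $\b x_9$ to the corners of $\hat K$ and lie on the lines $y=x$ or $y=-x$; on these lines $q_i$ restricts to $(d_i+f_i+e_i)t^2$, respectively $(d_i+f_i-e_i)t^2$. Setting $A_i=d_i+f_i=\tfrac12 c_i$ and $E_i=e_i$, matching the two pieces sharing a given spoke yields one scalar equation, either $A_i+E_i=A_j+E_j$ or $A_i-E_i=A_j-E_j$ depending on which of the two lines the spoke lies on; for homogeneous quadratics the $C^1(\b x_9)$ condition is automatic, and the constant and linear coefficients of the pieces never appear. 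Hence the image of $\Delta$ on all of $S^{0,1}_2(\hat K)$ is exactly the set of $(2A_1,2A_2,2A_3,2A_4)$ for which these four equations are solvable in $E_1,\dots,E_4$.

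The final step analyzes this $4\times4$ system in $E_1,\dots,E_4$. Its coefficient matrix has one-dimensional kernel spanned by $(1,1,1,1)$ --- adding a common multiple of $xy$ to all four pieces preserves both the continuity equations and the $c_i$ --- hence rank $3$, so the system is solvable precisely when the right-hand side meets the one compatibility condition coming from the left kernel of the matrix; this condition is a single linear relation on $(A_1,\dots,A_4)$, which computes out to $A_1-A_2+A_3-A_4=0$, i.e. $c_1-c_2+c_3-c_4=0$. Conversely this relation makes the system solvable, so every piecewise constant satisfying it is in the image. Therefore the image is exactly the hyperplane $\{c_1-c_2+c_3-c_4=0\}$ of $S^{-1}_0(\hat K)$, a three-dimensional subspace. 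The one delicate point is the bookkeeping --- tracking which pair of triangles shares each of the four spokes and which line each spoke lies on --- because that is precisely what fixes the signs in the final relation; as a consistency check one can recompute $\ker\Delta|_{S^{0,1}_2(\hat K)}$, namely the piecewise harmonic quadratics, by the same edge restrictions and verify rank--nullity.
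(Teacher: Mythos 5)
Your argument is structurally correct and takes a genuinely different route from the paper. In fact the paper prints no separate proof of this theorem: its content is absorbed into the Bernstein--B\'ezier computation in the unisolvency lemma that follows, where the constant-Laplacian requirement on each $K_i$ becomes four linear equations in the thirteen B-coefficients (via Lemma 4.1 of Alfeld--Sorokina), the $C^1(\b x_9)$ condition becomes two more (via Lai--Schumaker), and the key observation is that the alternating sum of the four Laplacian equations vanishes identically in the B-coefficients --- that vanishing is precisely the stated relation on the $c_i$, and the explicit solution of the reduced system gives the converse inclusion. You instead argue monomially: use $C^1(\b x_9)$ to subtract a global affine function, reduce to piecewise homogeneous quadratics $d_ix^2+e_ixy+f_iy^2$, note that continuity across each spoke (lying on $y=\pm x$) is a single equation involving only $d_i+f_i$ and $e_i$, and read the image of $\Delta$ off a $4\times4$ system of rank $3$ whose kernel is the global function $xy$. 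Your version is elementary and self-contained (no B-form, no external lemmas); the paper's version buys the unisolvency lemma from the same set of equations at no extra cost.

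The one caveat is exactly the point you flag and then do not carry out: the sign bookkeeping. With the labeling in the paper's figure ($K_1$ bottom, $K_2$ right, $K_3$ left, $K_4$ top, so the cyclic order around $\b x_9$ is $K_1,K_2,K_4,K_3$), writing your four spoke equations and applying the left null vector yields $c_1-c_2+c_4-c_3=0$: the alternating sum taken in the cyclic order of the triangles, so adjacent triangles carry opposite signs and opposite triangles the same sign. Your asserted $c_1-c_2+c_3-c_4=0$ agrees with this only if the triangles are labeled consecutively around the center --- evidently the convention the theorem statement intends, but one that conflicts with the figure; the same discrepancy is already present in the paper, whose own four equations are listed in the order $K_1,K_2,K_4,K_3$. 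Since this sign pattern is the entire content of the theorem, you should display the four spoke equations and the left null vector explicitly (and state the labeling convention) rather than asserting the outcome.
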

\begin{lemma} The conforming $P_2$  finite element function 
    $u_h\in P_{\hat K_h}$ is unisolvent by the eight nodal values,  $u_h(\b x_i)$,
   $i=1,2,\dots,8$, and the value $\Delta u_h(\b x_9)$.
\end{lemma}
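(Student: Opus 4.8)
The plan is to prove that the evaluation operator
\[
L\colon P_{\hat K}\longrightarrow\RR^9,\qquad
v_h\longmapsto\bigl(v_h(\b x_1),\dots,v_h(\b x_8),\ \Delta v_h(\b x_9)\bigr),
\]
is a linear isomorphism, by establishing separately that (i) $L$ is injective and (ii) $\dim P_{\hat K}=9$. Since an injective linear map between equidimensional spaces is automatically bijective, this is exactly the asserted unisolvence.

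For (i): suppose $v_h\in P_{\hat K}$ has $v_h(\b x_1)=\dots=v_h(\b x_8)=0$ and $\Delta v_h=0$. On each of the four sides of $\partial\hat K$ the trace of $v_h$ is a single univariate quadratic vanishing at the three marked points lying on that side, so $v_h\equiv 0$ on $\partial\hat K$. Then $v_h|_{K_1}$ is a harmonic polynomial of degree $\le2$ vanishing on the line carrying the bottom edge, hence divisible by $(y+1)$, and harmonicity forces $v_h|_{K_1}=(y+1)(a_1x+c_1)$; similarly $v_h|_{K_2}=(x-1)(q_2y+r_2)$, $v_h|_{K_4}=(y-1)(a_4x+c_4)$, and $v_h|_{K_3}=(x+1)(q_3y+r_3)$, leaving only eight scalar unknowns. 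Imposing that the value and the gradient of the four pieces agree at $\b x_9=(0,0)$ is a small linear system: the value condition gives $c_1=-r_2=-c_4=r_3=:t$, and the first gradient component then gives $a_1=-t=-a_4=t$, so $t=0$ and in turn every scalar vanishes, i.e.\ $v_h\equiv0$. This short computation is the substantive content, and it is nonsingular precisely because the diagonals through $\b x_9$ are straight lines (opposite edges at $\b x_9$ are collinear); I expect checking this system to be the main obstacle.

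For (ii): since $\Delta v_h$ is a single real constant for every $v_h\in P_{\hat K}$, the function $v_h-\tfrac14(\Delta v_h)(x^2+y^2)$ lies in the space $Q:=\ker\bigl(\Delta\colon S^{0,1}_2(\hat K)\to S^{-1}_0(\hat K)\bigr)$ of continuous, piecewise harmonic $P_2$ functions that are $C^1$ at $\b x_9$; as $x^2+y^2\notin Q$ this yields $P_{\hat K}=Q\oplus\RR\,(x^2+y^2)$ and $\dim P_{\hat K}=\dim Q+1$. By the Theorem, $\Delta$ has rank $3$ on $S^{0,1}_2(\hat K)$, so $\dim Q=\dim S^{0,1}_2(\hat K)-3$, and it remains only to record that $\dim S^{0,1}_2(\hat K)=11$: continuous piecewise $P_2$ on this triangulation (five vertices, eight edges) has dimension $13$, and imposing $C^1$ at the singular vertex $\b x_9$ removes exactly two of these. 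Hence $\dim Q=8$ and $\dim P_{\hat K}=9$. One may instead bypass the singular-vertex count by writing down a basis of $Q$ explicitly --- the global harmonics $1,x,y,xy$ together with, for each boundary-edge midpoint $\b x_j$, the function that equals on the unique triangle containing $\b x_j$ the harmonic quadratic vanishing on that triangle's two interior edges and taking the value $1$ at $\b x_j$ (for instance $y^2-x^2$ on $K_1$), and equals $0$ on the other three triangles; one checks these eight functions lie in $Q$ and, combined with the injectivity of $Q\to\RR^8$ from part (i), form a basis, so $\dim Q=8$. With (i) and (ii) in hand, $L$ is an injective linear map between $9$-dimensional spaces, hence an isomorphism, which is the claim.
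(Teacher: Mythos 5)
Your proof is correct, but it takes a genuinely different route from the paper's. The paper works entirely in Bernstein--B\'ezier form: it determines the eight boundary B-coefficients from the nodal values, writes the constant-Laplacian requirement as four linear conditions (Lemma 4.1 of Alfeld--Sorokina) and the $C^1$ requirement at $\b x_9$ as two conditions (Theorem 2.28 of Lai--Schumaker), notes that the alternating sum of the four Laplacian equations vanishes, and then solves the resulting square system explicitly, so that all thirteen B-coefficients --- and hence existence as well as uniqueness --- come out of explicit formulas. You instead prove unisolvence abstractly: injectivity by factoring each harmonic piece against its outer edge and collapsing the value/gradient matching at the center, and surjectivity by the dimension count $\dim P_{\hat K}=9$, obtained from $\dim S^{0,1}_2(\hat K)=13-2=11$ together with the rank-$3$ statement of the Theorem (or, alternatively, from your explicit harmonic basis of $Q$). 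What the paper's computation buys is concrete coefficient formulas (useful for implementing the element) and independence from the Theorem; what your argument buys is a coordinate-free proof that simultaneously establishes the dimension-$9$ claim made in the text before the lemma and makes visible where the singular-vertex geometry enters (the two-condition $C^1$ count and the rank deficiency of $\Delta$). Two small points to tidy: placing $v_h-\tfrac14(\Delta v_h)(x^2+y^2)$ in $Q$ uses the global continuity of members of $P_{\hat K}$, which the paper asserts as immediate and which you should either cite or re-derive from the jump argument along the interior edges; and in the gradient-matching step the correct chain is $a_1=r_2=r_3=-a_4$ with $r_2=-t$, $r_3=t$ (your displayed chain has a sign slip), though the conclusion $t=0$, and hence the vanishing of all eight parameters, is unaffected.
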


\begin{proof}  We use the Bernstein-B\'ezier form of $u_h$ on $\hat K_h$ with the B-coefficients $c_1$, $c_2$, \dots, $c_{13}$ associated 
with the domain points in $\hat K_h$ as depicted in Figure~\ref{K-hat} (right), see e.g. Chapter 2 of~\cite{LaiSch} for relevant definitions. Using the 
eight nodal values,  $u_h(\b x_i)$, $i=1,2,\dots,8$, we compute the eight B-coefficients by interpolation conditions as follows
 \begin{alignat*}{2}
 &c_i=u_h(\b x_i), &i=1,\dots,4,\\
 &c_5=2u_h(\b x_5)-\left(c_1+c_2\right)/2,\hskip 10pt &c_6=2u_h(\b x_6)-\left(c_2+c_3\right)/2, \\
 &c_7=2u_h(\b x_7)-\left(c_3+c_4\right)/2,\hskip 10pt &c_8=2u_h(\b x_6)-\left(c_4+c_1\right)/2. \\
 \end{alignat*}
By Lemma 4.1 in~\cite{va1}, the following four conditions are necessary and sufficient for $\Delta u_h=\Delta u_h(\b x_9)$  on each triangle $K_i$, $i=1,\dots,4$,  in the  square $\hat K_h$:
\begin{align}\label{system1} \ad{
&2c_9+c_1+c_2-2c_{10}-2c_{11}=\Delta u_h(\b x_9)/2,\\ 
&2c_9+c_2+c_3-2c_{11}-2c_{12}=\Delta u_h(\b x_9)/2,\\ 
&2c_9+c_3+c_4-2c_{12}-2c_{13}=\Delta u_h(\b x_9)/2,\\ 
&2c_9+c_4+c_1-2c_{13}-2c_{10}=\Delta u_h(\b x_9)/2. }
\end{align}
By Theorem 2.28 in~\cite{LaiSch}, the following two conditions  are necessary and sufficient for $u_h$ to be $C^1$  at the center $\b x_9$ 
of the  square $\hat K_h$:
\begin{align}\label{system2} \ad{
&2c_9-c_{10}-c_{12}=0,\\ 
&2c_9-c_{11}-c_{13}=0. }
\end{align}
Note that the alternating sum of the four equations in~(\ref{system1}) vanishes. Thus we only consider the first three equations of~(\ref{system1}).  Substituting $c_{13}=c_{10}+c_{12}-c_{11}$, and $2c_9=c_{10}+c_{12}$  from~(\ref{system2}), into~(\ref{system1}), we obtain a system of three equations with three unknowns that has a unique solution given by
\begin{align}\label{solution} \ad{
&c_9=(c_1+c_2+c_3+c_4-2\Delta u_h(\b x_9))/4,\\ 
&c_{10}=(2c_1+c_2+c_4-\Delta u_h(\b x_9))/4,\\ 
&c_{11}=(2c_2+c_1+c_3-\Delta u_h(\b x_9))/4,\\ 
&c_{12}=(2c_3+c_2+c_4-\Delta u_h(\b x_9))/4,\\ 
&c_{13}=(2c_4+c_1+c_3-\Delta u_h(\b x_9))/4. }
\end{align}
Therefore, all thirteen B-coefficients $c_1,c_2,\dots, c_{13}$ have been uniquely determined by the eight nodal values $u_h(\b x_i)$,
   $i=1,2,\dots,8$, and by $\Delta u_h(\b x_9)$.
\end{proof}

Let $\mathcal{M}_h=\{K :  \cup K=\Omega \}$ be a square subdivision of the domain $\Omega$.
We subdivide each rectangle $K$ in to four triangles $K_i$ as in Figure \ref{K-hat}, and let
   $\mathcal{T}_h=
   \{K_i : K_i \subset K \} $ be the corresponding triangular grid of grid-size $h$.
The $P_2$ finite element space on the grid is defined by
\an{\label{P2N} 
   V_h & =\{ v_h \in H^1_0(\Omega)
          \mid \ v_h|_K =\sum_{i=1}^8 c_i \phi_i + c_9 \phi_9  \in P_{K}
           \ \forall K\in\mathcal{M}_h \},  }
  where $P_K$ is defined in \eqref{P2hat}, 
    basis $\phi_i(\b x_j)=\delta_{ij}$ and $\Delta \phi_{i9}(\b x_9)=\delta_{i9}$.  
The interpolated Galerkin finite element problem reads:   Find 
 $u_h=\sum_{K\in\mathcal{M}_h}\Big(\sum_{i=1}^8 u_i \phi_i - f(\b x_9) \phi_9\Big)$
  such that
\an{ \label{E-2}
  (\nabla u_h, \nabla v_h)=(f,v_h) \quad\forall v_h=\sum_{K\in\mathcal{M}_h} \sum_{i=1}^8 v_i \phi_i. }

\section{The $P_2$ interpolated nonconforming finite element}

We define a  $P_2$ interpolated Galerkin nonconforming finite element on
  general triangular grids in this section.
This element is the best one to describe the difference between  interpolated Galerkin
   finite element methods and standard Galerkin finite element methods.
The $P_2$ nonconforming finite element function is continuous on the two Gauss-Legendre 
  points of every edge.
But the set of 6 nodal values of a $6$-dimensional $P_2$ polynomial is linearly
  dependent.
We can use the 5-dimensional harmonic $P_2$ polynomials to build these 5 basis 
   functions which has
  non-zero values at the 6 Gauss-Legendre points and zero Laplacian at the 
  barycenter of triangle.
As these 6 Gauss-Legendre points on edges are always on an ellipse, the last $P_2$ basis function 
   has a constant Laplacian 1 everywhere on the triangle 
   and vanishes at the Gauss-Legendre points.
This is how the basis functions are defined 
  in the standard $P_2$ nonconforming finite element.
Now, instead of solving the coefficient of this last basis function from the discrete equations,  
  we can interpolate the right hand side function  to get this coefficient directly.

Let $\mathcal {T}_h$ be a shape-regular, quasi-uniform triangulation on $\Omega$.
The $P_2$ non-conforming interpolated finite element space is defined by
\an{\label{P2N} \ad{
   V_h =\{ v_h \in L^2(\Omega)
          \mid \ &  v_h \t{ is continuous at two Gauss points each edge }, \\
         &  v_h \t{ is zero at two Gauss points on boundary edge }, \\
         & v_h|_K =\sum_{i=1}^5 c_i \phi_i + c_0 \phi_0  \in P_2(K) 
           \ \forall K\in\mathcal{T}_h \},  } }
  where  $\{\phi_i, i=1,...,5\}$ 
    are global basis functions restricted on $K$ which are
   $P_2$ harmonic functions (i.e., spanned by $\{1,x,y,xy,x^2-y^2\}$, cf. \cite{Sorokina})
    and $ \Delta \phi_{0}(\b x_0)=-1$,  $\phi_{0}(\b x_i)=0$, $i=1,...,5$,
      cf. Figure \ref{f-p2}.

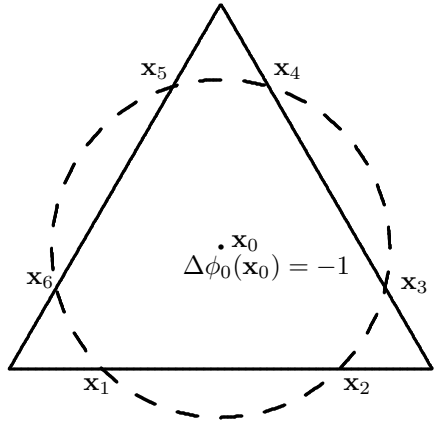
\begin{figure}[ht] \setlength{\unitlength}{0.8pt}
 \begin{center}\begin{picture}(  200.,  192)(  0.,  -20.)
     \def\lb{\circle*{0.8}}\def\lc{\vrule width1.2pt height1.2pt}
     \def\la{\circle*{0.3}}

     \multiput(   0.00,   0.00)(   0.125,   0.216){799}{\la}
     \multiput(   0.00,   0.00)(   0.250,   0.000){800}{\la}
     \multiput( 200.00,   0.00)(  -0.125,   0.216){799}{\la}
  \multiput( 180.00,  57.00)( -0.011,  0.166){ 62}{\la}
  \multiput( 177.28,  77.70)( -0.054,  0.158){ 62}{\la}
  \multiput( 169.29,  96.98)( -0.093,  0.139){ 62}{\la}
  \multiput( 156.59, 113.55)( -0.125,  0.110){ 62}{\la}
  \multiput( 140.03, 126.26)( -0.149,  0.074){ 62}{\la}
  \multiput( 120.75, 134.26)( -0.163,  0.033){ 62}{\la}
  \multiput( 100.05, 137.00)( -0.166, -0.011){ 62}{\la}
  \multiput(  79.35, 134.29)( -0.158, -0.053){ 62}{\la}
  \multiput(  60.06, 126.32)( -0.139, -0.092){ 62}{\la}
  \multiput(  43.49, 113.62)( -0.110, -0.125){ 62}{\la}
  \multiput(  30.76,  97.08)( -0.074, -0.149){ 62}{\la}
  \multiput(  22.75,  77.80)( -0.033, -0.163){ 62}{\la}
  \multiput(  20.00,  57.10)(  0.011, -0.166){ 62}{\la}
  \multiput(  22.70,  36.40)(  0.053, -0.158){ 62}{\la}
  \multiput(  30.66,  17.11)(  0.092, -0.139){ 62}{\la}
  \multiput(  43.34,   0.52)(  0.125, -0.110){ 62}{\la}
  \multiput(  59.88, -12.21)(  0.149, -0.074){ 62}{\la}
  \multiput(  79.15, -20.24)(  0.163, -0.033){ 62}{\la}
  \multiput(  99.84, -23.00)(  0.166,  0.011){ 62}{\la}
  \multiput( 120.55, -20.32)(  0.158,  0.053){ 62}{\la}
  \multiput( 139.85, -12.37)(  0.139,  0.092){ 62}{\la}
  \multiput( 156.44,   0.30)(  0.110,  0.125){ 62}{\la}
  \multiput( 169.19,  16.83)(  0.074,  0.149){ 62}{\la}
  \multiput( 177.22,  36.10)(  0.033,  0.163){ 62}{\la}
  \put(35,-10){$\b x_1$}\put(158,-10){$\b x_2$}
  \put(8,40){$\b x_6$}\put(185,38){$\b x_3$}
  \put(62,139){$\b x_5$}\put(125,139){$\b x_4$}
  \put(105,57.6){$\b x_0$}\put(100,57.6){\circle*{3}}
  \put(82,45){$\Delta \phi_0(\b x_0)=-1$}
   
 \end{picture}\end{center}
\caption{Nodal points of $P_2$ non-conforming finite elements. \label{f-p2} }
\end{figure}

The $P_2$-nonconforming, interpolated Galerkin finite element problem reads:   Find 
 $u_h=\sum_{K\in\mathcal{T}_h}\Big(\sum_{i=1}^5 u_i \phi_i + f(\b x_0) \phi_0\Big)$
  such that
\an{\label{E-2n} (\nabla_h u_h, \nabla_h v_h)=(f,v_h) \quad\forall v_h=\sum_{K\in\mathcal{T}_h} \sum_{i=1}^5 v_i \phi_i. }

\section{$P_k$ ($k\ge 3$) interpolated Galerkin finite elements}
There is no $P_1$ interpolated finite element as the Laplacian of a $P_1$ polynomial is
   zero.
It needs special cares for $P_2$ interpolated finite elements, as we did in the last two
   sections.
But for $P_3$ and above interpolated finite elements, we can simply replace all
   internal Lagrange degrees of freedom by the Laplacian values, which are
  obtained from the given right hand side function $f$.
However we could not prove the uni-solvence for general $k$.
We define another type interpolated finite element for $P_k$ ($k\ge 4$), where
  we use local averaging Laplacian values in stead of pointwise Laplacian value.

Let $\mathcal {T}_h$ be a shape-regular, general quasi-uniform triangulation on $\Omega$.
The $P_3$  interpolated finite element space is defined by
\an{\label{P-3} \ad{
   V_h =\{ v_h \in H^1_0(\Omega)
          \mid \  
         & v_h|_K =\sum_{i=1}^{9} c_i \phi_i +  c_0 \phi_0  \in P_3(K) 
           \ \forall K\in\mathcal{T}_h \},  } }
  where $\{\phi_i, i=1,...,9\}$ 
    are boundary Lagrange basis functions which have vanishing Laplacian at 
    the barycenter $\b x_0$ of $K$,  and
   $\phi_0$ vanishes on the three edges of $K$ and  $\Delta \phi_{0}(\b x_0)=-1$,
      cf. Figure \ref{f-p3}.

\begin{figure}[ht] \setlength{\unitlength}{1pt}
 \begin{center}\begin{picture}(160.,  100)(  0.,  0)

\put(0,0){\line(1,0){160}} \put(0,0){\line(4,5){80}} \put(160,0){\line(-4,5){80}} 

    \multiput(0, 0)(26.6,33.3){4}{\circle*{5}}\multiput(160, 0)(-26.6,33.3){3}{\circle*{5}}  
   \multiput(0, 0)(53.3,0){4}{\circle*{5}} \put(113,70){$\phi_i(\b x_l)=\delta_{i,l}$}
     \put(80,33.3){\circle*{5}}
    \put(42,40){$\Delta\phi_0( \b x_0)=-1$}
 \end{picture}\end{center}
\caption{Nodal points of $P_3$ interpolated finite elements. \label{f-p3} }
\end{figure}
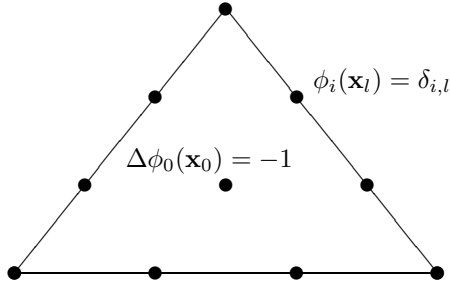

\begin{lemma} The $(9+1)$ nodal degrees of freedom in \eqref{P-3} 
   uniquely define a $P_3$ polynomial.
\end{lemma}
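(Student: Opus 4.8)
The plan is to argue by a dimension count. The space $P_3(K)$ has dimension $10$, and the degrees of freedom appearing in \eqref{P-3} are the nine point evaluations $v_h\mapsto v_h(\b x_l)$ at the boundary Lagrange nodes (the three vertices and the two points that trisect each edge) together with the single functional $v_h\mapsto\Delta v_h(\b x_0)$ at the barycenter, again $10$ in number. Hence it suffices to show the homogeneous problem has only the trivial solution: if $p\in P_3(K)$ vanishes at all nine boundary Lagrange nodes and $\Delta p(\b x_0)=0$, then $p\equiv 0$.

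First I would restrict $p$ to an edge $e$ of $K$. Parametrizing $e$ affinely, $p|_e$ is a univariate polynomial of degree at most $3$ that vanishes at the four Lagrange nodes lying on $e$ (the two endpoints and the two interior edge nodes), hence $p|_e\equiv 0$. Applying this to each of the three edges shows $p$ vanishes on $\partial K$, i.e.\ on each of the three lines $\{\lambda_i=0\}$, where $\lambda_1,\lambda_2,\lambda_3$ are the barycentric coordinates of $K$. Since the $\lambda_i$ are pairwise non-proportional polynomials of degree one, $p$ is divisible by $\lambda_1\lambda_2\lambda_3$; comparing degrees, $p=c\,\lambda_1\lambda_2\lambda_3$ for some constant $c$.

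It then remains to check that $\Delta(\lambda_1\lambda_2\lambda_3)$ does not vanish at $\b x_0$. Each $\lambda_i$ is affine, so $\Delta\lambda_i=0$ and $\nabla\lambda_i$ is constant; a direct application of the product rule gives
\[
\Delta(\lambda_1\lambda_2\lambda_3)=2\big[(\nabla\lambda_1\cdot\nabla\lambda_2)\lambda_3+(\nabla\lambda_1\cdot\nabla\lambda_3)\lambda_2+(\nabla\lambda_2\cdot\nabla\lambda_3)\lambda_1\big].
\]
At $\b x_0$ one has $\lambda_1=\lambda_2=\lambda_3=\tfrac13$, so $\Delta(\lambda_1\lambda_2\lambda_3)(\b x_0)=\tfrac23\sum_{i<j}\nabla\lambda_i\cdot\nabla\lambda_j$. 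Using $\lambda_1+\lambda_2+\lambda_3\equiv 1$, hence $\nabla\lambda_1+\nabla\lambda_2+\nabla\lambda_3=0$, and squaring, $\sum_{i<j}\nabla\lambda_i\cdot\nabla\lambda_j=-\tfrac12\sum_i|\nabla\lambda_i|^2<0$ for a non-degenerate triangle. Therefore $\Delta(\lambda_1\lambda_2\lambda_3)(\b x_0)=-\tfrac13\sum_i|\nabla\lambda_i|^2\neq 0$, and $0=\Delta p(\b x_0)=c\,\Delta(\lambda_1\lambda_2\lambda_3)(\b x_0)$ forces $c=0$, so $p\equiv 0$. This also confirms that the basis function $\phi_0$ with $\Delta\phi_0(\b x_0)=-1$ and vanishing boundary nodal values is well defined.

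The only genuine computation is the last one, the non-vanishing of $\Delta(\lambda_1\lambda_2\lambda_3)$ at the barycenter, and I do not expect it to present any real obstacle; alternatively it can be read off from the Bernstein--B\'ezier identity for the Laplacian (Lemma~4.1 of~\cite{va1}) exactly as in the proof given above for the $P_2$ element. The slight care needed is only to note that $\Delta(\lambda_1\lambda_2\lambda_3)$ is itself affine and could in principle vanish somewhere in $K$, so one must evaluate it specifically at $\b x_0$ rather than merely observe that it is a nonzero polynomial.
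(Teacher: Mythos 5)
Your proof is correct, and its overall skeleton matches the paper's: pass to the homogeneous system, use the nine boundary nodal values to conclude the cubic vanishes on all three edges, hence equals a constant multiple of the bubble $b=\lambda_1\lambda_2\lambda_3$, and then use the Laplacian condition at the barycenter to kill that constant. Where you diverge is the last step: you compute $\Delta(\lambda_1\lambda_2\lambda_3)$ explicitly and evaluate it at $\b x_0$, obtaining $-\tfrac13\sum_i|\nabla\lambda_i|^2\neq 0$, whereas the paper avoids any computation by noting that $\Delta u_h$ is affine, so by symmetry of the bubble $\int_K(\Delta u_h)\,b\,d\b x=\Delta u_h(\b x_0)\int_K b\,d\b x=0$, and then Green's identity gives $0=\int_K\nabla u_h\cdot\nabla b\,d\b x=C\int_K|\nabla b|^2\,d\b x$, forcing $C=0$. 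Your direct evaluation is more elementary and also yields the sign of $\Delta b(\b x_0)$ (useful for normalizing $\phi_0$), and you rightly flag the subtlety that $\Delta b$ is affine and vanishes along a line, so the evaluation must be made at $\b x_0$ specifically; the paper's integration-by-parts argument buys uniformity, since it is exactly the mechanism reused in Lemma 4.2 for the $P_k$, $k\ge 4$, elements with moment functionals $G_j$. Either route establishes unisolvence, and your dimension count ($10=\dim P_3$ versus $9+1$ functionals) is the same reduction to uniqueness that the paper makes.
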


\begin{proof} We have a square  system of 9 linear equations with 9 unknowns.
The uniqueness guarantees existence.  Let $u_h$ be a solution of the homogeneous 
   system. Then $u_h$ vanishes on 3 edges, cf. Figure \ref{P-3}.  $u_h=C b$, where 
  $b$ is the $P_3$ bubble function on $K$, vanishing on the edges and assuming value
   $1$ at the barycenter $\b x_0$.
  Since $\Delta u_h(\b x_0)=0$ and  $\Delta u_h$ is a linear function,
   we have, by symmetry, 
\a{ 0 &=-\Delta u_h(\b x_0)b =-\int_K (\Delta u_h)b \; d\b x 
    \\ &  =\int_K \nabla u_h \cdot \nabla b d \b x = C \int_K |\nabla  b|^2 d \b x. }
 Thus $C=0$, $u_h=0$ and the lemma is proved.
\end{proof}

The $P_3$  interpolated Galerkin finite element problem reads: 
\an{\nonumber \t{  Find } \
   u_h&=\sum_{K\in\mathcal{T}_h}\Big(\sum_{i=1}^{9} u_i \phi_i + 
          f(\b x_0)_K \phi_0\Big) \ \t{
  such that } \\
   \label{E-3} (\nabla u_h, \nabla v_h)&=(f,v_h) \quad\forall 
              v_h=\sum_{K\in\mathcal{T}_h} \sum_{i=1}^{9} v_i \phi_i. }

For defining $P_k$ ($k\ge 4$) interpolated finite elements,  we explicitly define
  two types degrees of freedom.   
Let boundary nodal-value linear functional $F_i$ be
\an{\label{F-i} F_i(u) = u(\b x_i),  \quad i=1,...,3k, \t{ \ cf. Figure \ref{f-p5}.}  }
Let the Laplacian moment linear functional $G_j$ be
\an{\label{G-j} G_j(u) =\int_K p_j b \Delta u \, d\b x, \quad j=1,...,d_{k-3},  }
where $p$ is again the cubic bubble function on $K$,   $d_{k-3}=\dim P_{k-3}$,
  and $\{ p_j\}$ is an orthonormal basis obtained by the Gram-Schmidt process on 
  $\{ p_j=1,x,y,x^2,..., y^{k-3}\}$ under the inner product 
\a{ (u,v)_G = \int_K \nabla(b u)\cdot \nabla(b v) d\b x. }

\begin{figure}[ht] \setlength{\unitlength}{1pt}
 \begin{center}\begin{picture}(160.,  100)(  0.,  0)

\put(0,0){\line(1,0){160}} \put(0,0){\line(4,5){80}} \put(160,0){\line(-4,5){80}} 

     \multiput(0, 0)(16,20){6}{\circle*{5}}\multiput(160, 0)(-16,20){6}{\circle*{5}}  
   \multiput(0, 0)(32,0){6}{\circle*{5}} \put(103,78){$F_i(u)=u(\b x_i)$}
  \put(115,64){$ \b x_i$}   \put(35,27){$G_j(u)=\int_K p_jb\Delta u\,d\b x$}
 \end{picture}\end{center}
\caption{Nodal points and moments of $P_5$ interpolated finite elements. \label{f-p5} }
\end{figure}
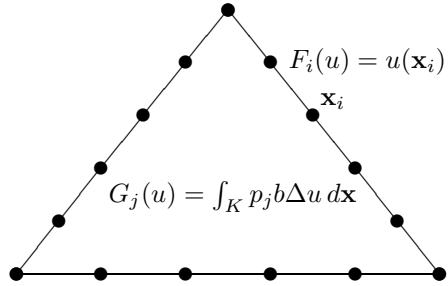

\begin{lemma}\label{l-k} The $(3k+d_{k-3})$ linear functionals in \eqref{F-i} and \eqref{G-j}
   uniquely define a $P_k$ polynomial.
\end{lemma}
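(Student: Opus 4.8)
The plan is the familiar unisolvence argument. Since the number of prescribed functionals matches the dimension of the target space,
\[
3k+d_{k-3}=3k+\frac{(k-1)(k-2)}{2}=\frac{(k+1)(k+2)}{2}=\dim P_k ,
\]
it suffices to show that the joint kernel is trivial: if $u\in P_k$ satisfies $F_i(u)=0$ for all $i$ and $G_j(u)=0$ for all $j$, then $u\equiv 0$. The claim then follows from rank--nullity.

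First I would use the boundary functionals \eqref{F-i}. The $3k$ points $\b x_i$ are the degree-$k$ Lagrange nodes on $\partial K$, so each of the three edges carries exactly $k+1$ of them. On a fixed edge, $u$ restricts to a univariate polynomial of degree $\le k$ with $k+1$ distinct zeros, hence it vanishes on that edge; doing this for all three edges gives $u\equiv 0$ on $\partial K$. As $\partial K$ is the zero set of the product of the three pairwise non-proportional linear edge forms, each such linear form divides $u$, and therefore so does the cubic bubble $b$, which is a nonzero scalar multiple of that product. Writing $u=bq$ and counting degrees yields $q\in P_{k-3}$.

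Next I would feed $u=bq$ into the Laplacian moments \eqref{G-j}. For any $p\in P_{k-3}$, Green's identity together with $b|_{\partial K}=0$ gives
\[
\int_K p\,b\,\Delta(bq)\,d\b x
  =-\int_K \nabla(bp)\cdot\nabla(bq)\,d\b x
   +\int_{\partial K} p\,b\,\partial_n(bq)\,ds
  =-(p,q)_G ,
\]
the boundary integral dropping out because $b$ vanishes on $\partial K$. Since $\{p_j\}$ is a basis of $P_{k-3}$, the hypothesis $G_j(u)=0$ for all $j$ is equivalent to $(p,q)_G=0$ for every $p\in P_{k-3}$; taking $p=q$ gives $(q,q)_G=\int_K|\nabla(bq)|^2\,d\b x=0$, so $\nabla(bq)\equiv 0$, i.e. $bq$ is constant on $K$. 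Since $bq$ also vanishes on $\partial K$, we get $bq\equiv 0$, hence $q\equiv 0$ and $u\equiv 0$. (Note that only the spanning property of $\{p_j\}$, not the orthonormality, is used here; the same computation also shows $(\cdot,\cdot)_G$ is positive definite on $P_{k-3}$, so the Gram--Schmidt construction in \eqref{G-j} is legitimate.)

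I expect the only delicate point to be the integration by parts — the same trick already used in the proof of the $P_3$ case: one must check that the boundary contribution really vanishes, which it does precisely because $b$ annihilates $\partial K$ (the reason $b$ is built into every $G_j$), and that $(\cdot,\cdot)_G$ is a genuine inner product on $P_{k-3}$, which reduces to the elementary fact that a polynomial vanishing on $\partial K$ with identically zero gradient must be the zero polynomial. The remaining ingredients, namely the dimension identity above and the factorization $b\mid u$, are routine.
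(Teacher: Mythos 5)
Your proposal is correct and follows essentially the same route as the paper's proof: the dimension count reduces the claim to uniqueness, the vanishing boundary values give the factorization $u=bq$ with $q\in P_{k-3}$, and testing the Laplacian moments against the coefficient combination representing $q$ (equivalently, using that $\{p_j\}$ spans $P_{k-3}$) yields $\int_K|\nabla u|^2\,d\b x=0$, hence $u\equiv 0$. Your write-up is a bit more explicit than the paper's (the edge-by-edge zero count, the vanishing boundary term in Green's identity, and the positive definiteness of $(\cdot,\cdot)_G$, which also fixes a sign the paper glosses over), but the underlying argument is the same.
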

\begin{proof} Because $\dim P_k=3k+d_{k-3}$,  we have a square linear system
   of equations when applying the functionals to determine a $P_k$ polynomial.
 We only need to show the uniqueness. 
  
Let $u\in P_k$ such that $F_i(u)=0$ and $G_j(u)=0$,  $i=1,...,3k, 
   j=1,..., d_{k-3}$.
Because $u=0$ on the three edges (cf. Figure \ref{f-p5}) we have 
\a{  u=bp \t{ \ for some } p\in P_{k-3}. } 
Let the combination of $p_j$ in $G_j$, defined in \eqref{G-j}, be $p$.
We get
\a{ 0&=\sum_{j=0}^{d_{k-3}} c_j G_j(u) = \int_K \sum_{j=0}^{d_{k-3}} c_j p_j b \Delta u\, d\b x\\
   &=  \int_K  p b \Delta u\, d\b x=  \int_K  |\nabla u|^2\, d\b x. }
Thus $\nabla u=0$ and $u=C$.  Because $u=bp=0$ on the boundary,  $u=C=0$.  
The proof is completed. 
\end{proof}
 
The $P_k$ ($k\ge 4$) interpolated finite element space is defined by
\an{\label{P-k} \ad{
   V_h =\{ v_h \in H^1_0(\Omega)
          \mid \  
         & v_h|_K =\sum_{i=1}^{3k} c_i \phi_i + \sum_{j=1}^{d_{k-3}} c_j \psi_j  \in P_k(K) 
           \ \forall K\in\mathcal{T}_h \},  } }
  where $\{\phi_i, \psi_j\}$ is the dual basis of $\{F_i, G_j\}$, by Lemma \ref{l-k}. 
The $P_k$ ($k\ge 4$) interpolated Galerkin finite element problem reads: 
\an{\label{u-d} \t{  Find } \
   u_h&=\sum_{K\in\mathcal{T}_h}\Big(\sum_{i=1}^{3k} u_i \phi_i - 
        \sum_{j=1}^{d_{k-3}} (f,\psi_j)_K \psi_j\Big) \ \t{
  such that } \\
   \label{E-k} (\nabla u_h, \nabla v_h)&=(f,v_h) \quad\forall 
              v_h=\sum_{K\in\mathcal{T}_h} \sum_{i=1}^{3k} v_i \phi_i. }

\section{Convergence theory} 

\begin{theorem}\label{m1}
 Let $u$ and $u_h$ be the exact solution of \eqref{e} and the $P_k$ ($k\ge 4$)
  interpolated finite element solution 
   of \eqref{E-k}, respectively.
  Then
   \an{\label{c11} \| u-u_h\|_{0}+h | u-u_h |_{1} \le C h^{k+1} | u |_{k+1},  } 
where $|\cdot|_k$ is the Sobolev (semi-)norm $H^k(\Omega)$.
\end{theorem}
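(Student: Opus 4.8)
The plan is to follow the classical Strang-type argument for a Galerkin method whose trial and test spaces differ only by an interpolation of data, so that the error splits into an approximation part and a consistency part. First I would set up the right functional framework: let $V_h^0 = \{\, v_h = \sum_{K}\sum_{i=1}^{3k} v_i\phi_i \,\}$ be the test space (the ``Galerkin part'') and note that the trial function $u_h$ differs from an element of $V_h^0$ by the fixed, data-determined piece $w_h := -\sum_K \sum_j (f,\psi_j)_K \psi_j$. The key algebraic observation, which I would isolate as a lemma, is that on each $K$ the bubble-moment functionals $G_j$ were chosen so that $G_j(u) = (u,\psi\text{-stuff})_G$ coincides with $-\int_K \psi_j \Delta u\,d\b x = (\nabla u,\nabla \psi_j)_K$ when $u$ is the exact solution (after integrating by parts, since $\psi_j = 0$ on $\partial K$). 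Hence the interpolation operator $I_h u := \sum_K\big(\sum_i u(\b x_i)\phi_i + \sum_j G_j(u)\psi_j\big)$ satisfies $(\nabla(u - I_h u),\nabla \psi_j)_K = 0$ for every bubble basis function $\psi_j$ and every $K$; equivalently $u_h$ is exactly the Galerkin projection of $u$ onto $I_h u + V_h^0$ with respect to the $H^1_0$ inner product restricted appropriately. I would make this precise and record that $u_h = I_h u + (\text{Galerkin correction in } V_h^0)$.

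With that structure in place the proof of the $H^1$ estimate is short. Since $V_h^0 \subset H^1_0(\Omega)$ and the bilinear form $(\nabla\cdot,\nabla\cdot)$ is coercive on $H^1_0$, Galerkin orthogonality in the test space gives, for all $v_h \in V_h^0$,
\[
  (\nabla(u-u_h),\nabla v_h) = (\nabla u,\nabla v_h) - (f,v_h) = 0,
\]
using $-\Delta u = f$. Therefore, writing $u - u_h = (u - I_h u) + (I_h u - u_h)$ and noting $I_h u - u_h \in V_h^0$, a standard Céa-type manipulation together with the orthogonality $(\nabla(u-I_h u),\nabla\psi_j)=0$ shows $|u - u_h|_1 \le C\,|u - I_h u|_1$. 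So everything reduces to bounding $|u - I_h u|_1$ and $\|u - I_h u\|_0$ by an interpolation estimate.

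The main technical obstacle is precisely the approximation estimate for $I_h$: one must show $\|u - I_h u\|_0 + h|u - I_h u|_1 \le C h^{k+1}|u|_{k+1}$. This is not the textbook Lagrange estimate because the interior degrees of freedom are not point values but the weighted Laplacian moments $G_j$. I would handle it by the Bramble–Hilbert lemma applied on the reference triangle: show $I_h$ is a well-defined bounded linear operator on $H^{k+1}(\hat K)$ (using Lemma \ref{l-k} for unisolvence and a trace/Sobolev embedding to control the functionals $F_i$ and $G_j$), show it reproduces all of $P_k$, conclude $\|u - I_h u\|_{H^m(\hat K)} \le C|u|_{H^{k+1}(\hat K)}$ for $m = 0,1$, and then scale back to $K$ using the standard affine scaling of Sobolev seminorms on a shape-regular, quasi-uniform mesh, picking up the factors $h^{k+1}$ and $h^{k}$ respectively. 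Summing over $K$ gives the global interpolation bound, and combining with the $H^1$ reduction above yields $|u - u_h|_1 \le C h^k |u|_{k+1}$. Finally, for the $L^2$ bound I would run the Aubin–Nitsche duality argument: let $z$ solve $-\Delta z = u - u_h$ with homogeneous Dirichlet data, use elliptic regularity $|z|_2 \le C\|u-u_h\|_0$, and estimate $(u-u_h, u-u_h) = (\nabla(u-u_h),\nabla(z - v_h))$ for a suitable $v_h \in V_h^0$ plus a consistency term coming from the fact that $z$ itself is approximated by $I_h z$ rather than an element of $V_h^0$; both terms are controlled by $h\,|u - u_h|_1 \cdot |z|_2 \le C h^{k+1}|u|_{k+1}\|u-u_h\|_0$, giving the stated optimal $O(h^{k+1})$ convergence in $L^2$.
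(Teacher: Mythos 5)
Your proposal is correct and takes essentially the same route as the paper: the integration-by-parts identity relating $G_j(u)=\int_K p_j b\,\Delta u\,d\b x$ to $(\nabla u,\nabla (b p_j))_K$ (the paper's computation \eqref{o2}) shows that the interpolated interior coefficients reproduce the Galerkin projection in the bubble directions, after which C\'ea's argument reduces everything to the interpolation error and the Aubin--Nitsche duality gives the $L^2$ bound, exactly as in the paper's proof of \eqref{c11}. The only differences are presentational: you phrase the key fact as ``$u_h$ is the Galerkin projection onto $I_hu+V_h^0$'' rather than verifying $(\nabla(u-u_h),\nabla\psi_j)=0$ directly, and you spell out a Bramble--Hilbert/scaling proof of the interpolation estimate $|u-I_hu|_1\le Ch^k|u|_{k+1}$, which the paper simply asserts.
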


\begin{proof}  Testing \eqref{e} by $v_h=\phi_i
    \in H^1_0(\Omega)$,  we have
\an{\label{e-v} (\nabla u, \nabla v_h) &=(f,v_h).  }
Subtracting \eqref{E-k} from \eqref{e-v},
\an{\label{o1} (\nabla(u-u_h), \nabla v_h) &= 0 . }
Testing \eqref{e} by $v_h=\psi_j\in H^1_0(\Omega)$, by \eqref{G-j} and \eqref{u-d},
we get
\an{ \label{o2} \ad{ 
          (\nabla(u-u_h), \nabla \psi_j)  
    &=-\int_K \Delta u \psi_j d \b x - (f,\psi_j)_K\int_K |\nabla \psi_j|^2 d \b x \\
              &= \int_K  f\psi_j d \b x  -(f,\psi_j)_K
               = 0. } }
Combining \eqref{o1} and \eqref{o2} implies
\a{   |u-u_h |_{1}^2 &= (\nabla(u-u_h),\nabla(u-I_h u))\\
                     &\le |u-u_h |_{1} |u-I_h u |_{1}
              \le |u-u_h |_{1}  Ch^k \|u\|_{k+1}, }
  where $I_h$ is the interpolation operator to $V_h$.
This completes the $H^1$ estimate.   Let $w\in H^2(\Omega) \cap H^1_0(\Omega)$ solve
\a{ (\nabla w,\nabla v) = ( u-u_h, v) \quad \forall v\in V_h.  }
We assume $H^2$ regularity for the solution, i.e., 
\a{ \|w\|_2 \le C \|u-u_h\|_0.  }
Let $w_h$ be $P_k$ interpolated finite solution of $w$.
Then \a{ \|u-u_h\|_0^2 &= (\nabla w, \nabla (u-u_h)) = (\nabla( w-w_h), \nabla (u-u_h)) \\
      &\le |w-w_h|_1 |u-u_h| \le C h |w|_2 C h^k  \|u\|_{k+1}
     \\&  \le \|u-u_h\|_0 C h^{k+1}  \|u\|_{k+1}.  }
This gives the $L^2$ error estimate.
 \end{proof}

\begin{theorem}\label{m2}
 Let $u$ be the exact solution of \eqref{e}.  Let $u_h$ the $P_2$ conforming,
    or the $P_2$ nonconforming,  or the $P_3$ finite element solution 
   of \eqref{E-2}, \eqref{E-2n}, or \eqref{E-3}, respectively.
  Then
   \an{\label{c11} \| u-u_h\|_{0}+h | u-u_h |_{1} \le C h^{k+1} | u |_{k+1},  } 
where $k=2$, or $3$.
\end{theorem}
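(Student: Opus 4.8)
The plan is to reduce all three cases to the scheme already used in the proof of Theorem~\ref{m1}, after isolating the one structural fact that makes the reduction work. For each of the three elements, let $I_h$ be the element‑local interpolation operator associated with its degrees of freedom, so that on every element $I_h u$ matches the boundary nodal values of $u$ and has interior (bubble) coefficient equal to $f(\b x_0)$ (resp.\ $-f(\b x_9)$ for the $P_2$ conforming macro‑element) --- the same value prescribed for $u_h$ in \eqref{E-2}, \eqref{E-2n}, \eqref{E-3}; this is well defined whenever $f$ is continuous, which the method already requires. The point I would record is that, since every boundary basis function $\phi_i$ was built with vanishing Laplacian at the interior node, this is precisely the coefficient forced by asking $\Delta(I_h u)=\Delta u$ at that node, and as a consequence
\[\xi:=u_h-I_h u\]
carries no bubble part and therefore belongs to the test space of the corresponding discrete problem. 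I would also invoke the unisolvence of these elements (Sections~2--4) together with the Bramble--Hilbert lemma to obtain the standard local interpolation bounds $|u-I_h u|_{1,K}\le Ch^{k}|u|_{k+1,K}$ and $\|u-I_h u\|_{0,K}\le Ch^{k+1}|u|_{k+1,K}$, with $k=2$ for the two $P_2$ elements and $k=3$ for the $P_3$ element, the $P_2$ conforming pieces being images of the fixed reference macro‑element.

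For the conforming cases ($P_2$ conforming, $P_3$) I would then copy the $H^1$ argument of Theorem~\ref{m1}: testing \eqref{e} and the discrete problem by $v_h=\xi$ yields $(\nabla(u-u_h),\nabla\xi)=0$, so $|\xi|_1^2=(\nabla(u-I_h u),\nabla\xi)\le|u-I_h u|_1\,|\xi|_1$ and hence $|u-u_h|_1\le 2\,|u-I_h u|_1\le Ch^{k}|u|_{k+1}$. For the $L^2$ estimate I would use the dual problem $-\Delta w=u-u_h$ in $\Omega$, $w=0$ on $\partial\Omega$, with $\|w\|_2\le C\|u-u_h\|_0$; the one deviation from Theorem~\ref{m1} is that here I would \emph{not} subtract the discrete solution of the dual problem --- with a pointwise Laplacian functional the interpolated coefficient is not orthogonal to the error, so the analogue of \eqref{o2} fails. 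Instead I would subtract any test‑space function $w_h^0$ with $|w-w_h^0|_1\le Ch|w|_2$ (for instance a Scott--Zhang‑type quasi‑interpolant of $w$ onto the continuous piecewise‑linear, resp.\ piecewise‑bilinear on the macro‑mesh, subspace, which is contained in the test space and defined for $w\in H^2$), so that $\|u-u_h\|_0^2=(\nabla w,\nabla(u-u_h))=(\nabla(w-w_h^0),\nabla(u-u_h))\le Ch|w|_2\cdot Ch^{k}|u|_{k+1}$.

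For the $P_2$ nonconforming element ($k=2$) I would add the consistency step of a second Strang lemma. Writing $\nabla_h$ and $|\cdot|_{1,h}$ for the broken gradient and seminorm, $\xi=u_h-I_h u$ is still a test function for \eqref{E-2n}, and elementwise integration by parts for the exact solution combined with \eqref{E-2n} gives
\[(\nabla_h(u-u_h),\nabla_h\xi)=(\nabla_h u,\nabla_h\xi)-(f,\xi)=\sum_{K\in\mathcal{T}_h}\int_{\partial K}\frac{\partial u}{\partial n}\,\xi\,ds=\sum_e\int_e\frac{\partial u}{\partial n}\,[\xi]\,ds,\]
the last sum over edges, with $[\xi]$ the jump and $[\xi]=\xi$ on boundary edges. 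Since $u_h$ and $I_h u$ are single‑valued at the two Gauss points of every edge (and both vanish at boundary Gauss points), $[\xi]$ is a quadratic vanishing there, whence $\int_e[\xi]\,q\,ds=0$ for all $q\in P_1(e)$ because the two‑point Gauss rule is exact for cubics; I would then replace $\partial u/\partial n$ by $\partial u/\partial n$ minus its edgewise $L^2(e)$‑projection onto $P_1(e)$ and use trace, scaling and Bramble--Hilbert estimates to bound this consistency term by $Ch^{2}|u|_3\,|\xi|_{1,h}$, as in the classical analysis of this element. Combined with $|\xi|_{1,h}^2=(\nabla_h(u-u_h),\nabla_h\xi)+(\nabla_h(u-I_h u),\nabla_h\xi)$ this yields $|u-u_h|_{1,h}\le Ch^2|u|_3$; the $L^2$ bound then follows from the Aubin--Nitsche duality argument for nonconforming elements, where besides the Galerkin term two edge‑consistency terms appear and are controlled by the same $P_1$‑orthogonality of the jumps, giving $\|u-u_h\|_0\le Ch^3|u|_3$.

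I do not expect the conforming cases to be the hard part: once the shared‑coefficient observation is in place they copy Theorem~\ref{m1}, the only new subtlety being the mild care in the duality argument noted above, since --- unlike the moment functionals $G_j$ of Section~4 --- a pointwise Laplacian value is not orthogonal to the error. The main obstacle is the $O(h^2)$ consistency estimate for the $P_2$ nonconforming element; this is the classical estimate for that element, and the thing I would check carefully is that interpolating the bubble coefficient does not spoil it --- that is, that $\xi=u_h-I_h u$ really lies in the test space and that its jumps really vanish at the Gauss points, both of which reduce to the vanishing‑Laplacian property of the boundary basis functions.
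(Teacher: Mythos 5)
Your proof is essentially the paper's proof for the central step: the paper splits $u_h=\tilde u_h+u_0$ into the boundary-nodal part and the interpolated-Laplacian (bubble) part, observes that $I_h u$ has the same bubble part $u_0$ because $\Delta u=-f$ is interpolated at the same point, and concludes $|u-u_h|_1^2=(\nabla(u-u_h),\nabla(u-I_hu))$ exactly as you do with $\xi=u_h-I_hu$ lying in the test space. Where you genuinely add something is in the two places the paper compresses. First, for the $L^2$ bound the paper simply says the duality argument is ``identical to Theorem~\ref{m1}''; as you correctly note, it is not literally identical, since the analogue of \eqref{o2} fails for a pointwise Laplacian degree of freedom, so one cannot subtract the full discrete dual solution $w_h$; your fix --- subtracting any test-space function (piecewise linear, resp.\ $Q_1$ on the macro-mesh, which you verify lies in the test space) with $|w-w_h^0|_1\le Ch|w|_2$ --- is the right repair and makes rigorous a step the paper leaves implicit. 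Second, for the $P_2$ nonconforming element the paper disposes of the consistency error with a citation (``standard, cf.\ \cite{Li,Sorokina,Wang,ZhangMin,Zhang-Jump}''), whereas you carry out the second Strang lemma with the Gauss-point continuity giving $P_1(e)$-orthogonality of the jumps and the resulting $O(h^2)$ consistency bound, plus the corresponding duality argument; this is exactly the standard analysis the paper points to, so your treatment is consistent with it, just self-contained. In short: same decomposition and same key lemma as the paper, with your version supplying the details (and one small correction of emphasis) that the paper delegates to Theorem~\ref{m1} and to the literature.
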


\begin{proof} As there is one local/internal Laplacian degree of freedom,  the
   proof becomes very simple.  Testing \eqref{e} by nodal value basis $\tilde v_h=\phi_i$, 
     we have
\an{\label{e-v2} (\nabla u, \nabla \tilde v_h) &=(f,v_h).  }
Subtracting finite element equations from \eqref{e-v2},
\a{ (\nabla(u-\tilde u_h-u_0 ), \nabla v_h) &= 0, }
where we separate the finite element solution $u_h$ in to two parts,
   the nodal basis span part $\tilde u_h$ and the interpolated Laplacian part $u_0$
   (spanned by the last basis function $\phi_0$.)
Thus 
\a{   |u-u_h |_{1}^2 &= (\nabla(u-\tilde u_h-u_0),\nabla(u-\tilde v_h - u_0 ))\\
                     &= (\nabla(u-\tilde u_h-u_0),\nabla(u- I_h u ))\\
                     &\le |u-u_h |_{1} |u-I_h u |_{1}
              \le |u-u_h |_{1}  Ch^k \|u\|_{k+1}, }
  where $I_h$ is the interpolation operator to $V_h$.
This completes the $H^1$ estimate.   The $L^2$ error estimate is identical to above
   proof for Theorem \ref{m1}.
The treatment for the inconsistency by $P_2$ nonconforming element is standard, cf.  
 \cite{Li, Sorokina, Wang, ZhangMin, Zhang-Jump}.
 \end{proof}

\section{Numerical tests}
Let the  domain of the boundary value problem~\eqref{e} be 
  $\Omega=(0,1)^2$.  The exact solution is $u(x,y)=\sin \pi x \sin \pi y$.
  We chose a family of uniform triangular grids, shown in Figure~\ref{grids},  
     in all numerical tests on
    $P_k$ interpolated Galerkin finite element methods.

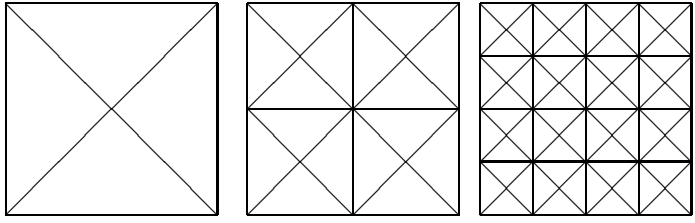
\begin{figure}[h!]
\begin{center} \setlength\unitlength{4pt}
\begin{picture}(70,20)(0,0)
  \def\tr{\begin{picture}(20,20)(0,0)\put(0,0){\line(1,0){20}}\put(0,20){\line(1,0){20}}
          \put(0,0){\line(0,1){20}} \put(20,0){\line(0,1){20}}  \put(20,0){\line(-1,1){20}} 
          \put(0,0){\line(1,1){20}} \end{picture}}
 \def\sq{\begin{picture}(20,20)(0,0)\put(0,0){\line(1,0){20}}\put(0,20){\line(1,0){20}}
          \put(0,0){\line(0,1){20}} \put(20,0){\line(0,1){20}}  \end{picture}}
 \def\pt{\begin{picture}(40,40)(0,0)\put(0,0){\line(1,0){40}}\put(0,40){\line(1,0){40}}
          \put(0,0){\line(0,1){40}} \put(40,0){\line(0,1){40}}
         \put(0,20){\line(1,0){10}}  \put(30,20){\line(1,0){10}}
         \put(20,0){\line(0,1){10}} \put(20,30){\line(0,1){10}}
        \put(10,20){\line(1,1){10}}\put(10,20){\line(1,-1){10}}
        \put(30,20){\line(-1,1){10}}\put(30,20){\line(-1,-1){10}}         \end{picture}}
  \def\hx{\begin{picture}(20,20)(0,0)\put(0,0){\line(1,0){20}}\put(0,20){\line(1,0){20}}
    \put(0,10){\line(1,1){10}}
          \put(0,0){\line(0,1){20}} \put(20,0){\line(0,1){20}}  \put(10,0){\line(1,1){10}}\end{picture}}

  \multiput(0,0)(20,0){1}{\multiput(0,0)(0,20){1}{\tr}} 

\put(22,0){  \setlength\unitlength{2pt}\begin{picture}(20,20)(0,0)
  \multiput(0,0)(20,0){2}{\multiput(0,0)(0,20){2}{\tr}} \end{picture} }

\put(44,0){  \setlength\unitlength{1pt}\begin{picture}(20,20)(0,0)
  \multiput(0,0)(20,0){4}{\multiput(0,0)(0,20){4}{\tr}} \end{picture} }
 %\put(66,0){  \setlength\unitlength{0.5pt}\begin{picture}(20,20)(0,0)
 % \multiput(0,0)(20,0){8}{\multiput(0,0)(0,20){8}{\tr}} \end{picture} }

 \end{picture}\end{center}
    \caption{The first three levels of grids in all numerical tests.  }
    \label{grids}
\end{figure}

We solve problem \eqref{e} first by the $P_2$ interpolated Galerkin 
   conforming finite element method 
  defined in \eqref{E-2} and by 
   the $P_2$ Lagrange finite element method, on same grids.
The errors and the orders of convergence are listed in Table \ref{t1}.
Both elements converge at the optimal order.

 \begin{table}[ht]
  \caption{\lab{t1} The error $e_h= I_h u- u_h$
     and the order of convergence, by the  $P_2$ conforming interpolated
      finite element and
    by the $P_2$ Lagrange finite element.  }
\begin{center}  \begin{tabular}{c|rr|rr|rr|rr}  %\multispan{3}
\hline  
grid  & $ \|e_h\|_{0}$ &$h^n$ &$ |e_h|_{1}$ & $h^n$ & $ \|e_h\|_{0}$ &$h^n$ & $ |e_h|_{1}$ & $h^n$   \\ 
\hline 
 & \multicolumn{4}{c|}{$P_2$ Interpolated conforming FE} &
        \multicolumn{4}{c}{$P_2$ Lagrange element}  \\ \hline  
 4&  0.614E-03&  3.2&  0.499E-01&  2.0&  0.615E-03&  3.2&  0.500E-01&  2.0\\
 5&  0.723E-04&  3.1&  0.124E-01&  2.0&  0.723E-04&  3.1&  0.124E-01&  2.0\\
 6&  0.887E-05&  3.0&  0.309E-02&  2.0&  0.887E-05&  3.0&  0.309E-02&  2.0\\
 7&  0.110E-05&  3.0&  0.773E-03&  2.0&  0.110E-05&  3.0&  0.773E-03&  2.0\\
 8&  0.138E-06&  3.0&  0.193E-03&  2.0&  0.138E-06&  3.0&  0.193E-03&  2.0\\
 9&  0.172E-07&  3.0&  0.483E-04&  2.0&  0.172E-07&  3.0&  0.483E-04&  2.0\\ 
     \hline
\end{tabular}\end{center} \end{table}

Next we solve the test problem \eqref{e} again, 
    by the  $P_2$ interpolated non-conforming finite element method \eqref{E-2n} and by 
   the standard $P_2$ nonconforming finite element method.
The errors and the orders of convergence are listed in Table \ref{t2}.
Again, both methods converge in the optimal order.

 \begin{table}[ht]
  \caption{\lab{t2} The error $e_h= I_h u- u_h$
     and the order of convergence, by the  $P_2$ interpolated nonconforming 
       finite element and
    by the $P_2$ nonconforming finite element.  }
\begin{center}  \begin{tabular}{c|rr|rr|rr|rr}  %\multispan{3}
\hline  
grid  & $ \|e_h\|_{0}$ &$h^n$ &$ |e_h|_{1}$ & $h^n$ & $ \|e_h\|_{0}$ &$h^n$ & $ |e_h|_{1}$ & $h^n$   \\ 
\hline 
 & \multicolumn{4}{c|}{$P_2$ Interpolated nonconforming FE} &
        \multicolumn{4}{c}{$P_2$ nonconforming element}  \\ \hline  
 2&  0.503E-02&  3.8&  0.839E-01&  2.3&  0.124E-01&  3.2&  0.186E+00&  2.5\\
 3&  0.118E-02&  2.1&  0.363E-01&  1.2&  0.164E-02&  2.9&  0.495E-01&  1.9\\
 4&  0.181E-03&  2.7&  0.111E-01&  1.7&  0.208E-03&  3.0&  0.126E-01&  2.0\\
 5&  0.244E-04&  2.9&  0.298E-02&  1.9&  0.260E-04&  3.0&  0.315E-02&  2.0\\
 6&  0.316E-05&  3.0&  0.767E-03&  2.0&  0.325E-05&  3.0&  0.789E-03&  2.0\\
 7&  0.406E-06&  3.0&  0.194E-03&  2.0&  0.407E-06&  3.0&  0.197E-03&  2.0\\ 
     \hline
\end{tabular}\end{center} \end{table}

In Table \ref{t3} we list the results of $P_3$ interpolated finite elements \eqref{E-3}
  and  
   the $P_3$ Lagrange finite elements. 

\begin{table}[h!]
  \caption{\lab{t3} The error $e_h= I_h u- u_h$
     and the order of convergence, by the  $P_3$ interpolated finite element and
    by the $P_3$ Lagrange finite element.  }
\begin{center}  \begin{tabular}{c|rr|rr|rr|rr}  %\multispan{3}
\hline  grid  & $ \|e_h\|_{0}$ &$h^n$ &
    $ |e_h|_{1}$ & $h^n$ & $ \|e_h\|_{0}$ &$h^n$ &
    $ |e_h|_{1}$ & $h^n$   \\ 
\hline & \multicolumn{4}{c|}{$P_3$ interpolated element} &
        \multicolumn{4}{c}{$P_3$ Lagrange element}  \\ \hline 
 4&  0.119E-04&  4.0&  0.114E-02&  2.9&  0.118E-04&  4.0&  0.114E-02&  3.0\\
 5&  0.742E-06&  4.0&  0.143E-03&  3.0&  0.742E-06&  4.0&  0.143E-03&  3.0\\
 6&  0.464E-07&  4.0&  0.180E-04&  3.0&  0.464E-07&  4.0&  0.180E-04&  3.0\\
 7&  0.290E-08&  4.0&  0.225E-05&  3.0&  0.290E-08&  4.0&  0.225E-05&  3.0\\
 8&  0.181E-09&  4.0&  0.281E-06&  3.0&  0.181E-09&  4.0&  0.281E-06&  3.0\\ 
     \hline
\end{tabular}\end{center} \end{table}
\vskip 10pt

We then solve problem \eqref{e} by the $P_4$/$P_5$/$P_6$ interpolated 
    finite element methods \eqref{E-k} and by 
   the $P_4$/$P_5$/$P_6$ Lagrange finite element methods.
The errors and the orders of convergence are listed in Table \ref{t4}.
The optimal order of convergence is achieved in all cases.
\vskip 10pt
\begin{table}[h!]
  \caption{\lab{t4} The error $e_h= I_h u- u_h$
     and the order of convergence, by the  $P_4$/$P_5$/$P_6$ interpolated finite elements and
    by the $P_4$/$P_5$/$P_6$ Lagrange finite elements.  }
\begin{center}  \begin{tabular}{c|rr|rr|rr|rr}  %\multispan{3}
\hline  grid  & $ \|e_h\|_{0}$ &$h^n$ &
    $ |e_h|_{1}$ & $h^n$ & $ \|e_h\|_{0}$ &$h^n$ &
    $ |e_h|_{1}$ & $h^n$   \\ 
\hline & \multicolumn{4}{c|}{$P_4$ interpolated element} &
        \multicolumn{4}{c}{$P_4$ Lagrange element}  \\ \hline  
 4&  0.136E-06&  4.7&  0.132E-04&  3.9&  0.159E-06&  5.0&  0.142E-04&  4.0\\
 5&  0.464E-08&  4.9&  0.859E-06&  3.9&  0.501E-08&  5.0&  0.891E-06&  4.0\\
 6&  0.151E-09&  4.9&  0.547E-07&  4.0&  0.157E-09&  5.0&  0.557E-07&  4.0\\ 
     \hline  
\hline & \multicolumn{4}{c|}{$P_5$ interpolated element} &
        \multicolumn{4}{c}{$P_5$ Lagrange element}  \\ \hline   
 3&  0.484E-06&  6.0&  0.501E-04&  4.9&  0.478E-06&  6.0&  0.499E-04&  4.9\\
 4&  0.755E-08&  6.0&  0.158E-05&  5.0&  0.754E-08&  6.0&  0.158E-05&  5.0\\
 5&  0.118E-09&  6.0&  0.495E-07&  5.0&  0.118E-09&  6.0&  0.495E-07&  5.0\\ 
     \hline
\hline & \multicolumn{4}{c|}{$P_6$ interpolated element} &
        \multicolumn{4}{c}{$P_6$ Lagrange element}  \\ \hline    
 2&  0.144E-05&  6.6&  0.639E-04&  5.6&  0.336E-06&  7.2&  0.164E-04&  6.3\\
 3&  0.122E-07&  6.9&  0.102E-05&  6.0&  0.276E-08&  6.9&  0.259E-06&  6.0\\
 4&  0.972E-10&  7.0&  0.160E-07&  6.0&  0.218E-10&  7.0&  0.406E-08&  6.0\\ 
     \hline  
\end{tabular}\end{center} \end{table}
\clearpage

\end{document}